\pgfplotsset{every axis/.append style={
                    axis x line=middle,    
                    axis y line=middle,    
                    axis line style={-,color=blue}, 
                    xlabel={$x$},          
                    ylabel={$y$},          
            }}
\DeclareMathOperator{\Hir}{Hir}
\DeclareMathOperator{\gen}{gen}
\DeclareMathOperator{\mult}{mult}
\DeclareMathOperator{\Spec}{Spec}
\def\cS{{\mathcal S}}
\def\ZZ{\mathbb{Z}}
\def\CC{\mathbb{C}}
\def\AA{\mathbb{A}}
\def\EE{\mathbb{E}}
\def\QQ{\mathbb{Q}}
\def\RR{\mathbb{R}}
\def\cO{\mathcal{O}}
\def\cF{\mathcal{F}}
\def\div{{\rm div}}
\def\val{\mathrm{val}}
\let\c@lemma\c@thm
\let\c@prop\c@thm
\let\c@propdef\c@thm
\let\c@proper\c@thm
\let\c@problem\c@thm
\let\c@conj\c@thm
\let\c@cor\c@thm
\let\c@rem\c@thm
\let\c@dfn\c@thm
\let\c@notation\c@thm
\let\c@exam\c@thm
\newcommand{\m}{\mathfrak{m}}
\newcommand{\tX}{\widetilde{X}}
\numberwithin{equation}{section}
\numberwithin{equation}{subsection}
\theoremstyle{plain}
\newtheorem{theorem}[equation]{Theorem}
\newtheorem{lemma}[equation]{Lemma}
\newtheorem{prop}[equation]{Proposition}
\theoremstyle{definition}
\newtheorem{example}[equation]{Example}
\newtheorem{remark}[equation]{Remark}
\title
{Ordinary $r$--tuples in cyclic quotient surface singularities}
\date{\today}
\author[J.I. Cogolludo]{Jos{\'e} I. Cogolludo-Agust{\'i}n}
\address{Departamento de Matem\'aticas, IUMA\\
Universidad de Zaragoza\\
C.~Pedro Cerbuna 12\\
50009 Zaragoza, Spain}
\email{jicogo@unizar.es}
\author[T. L\'aszl\'o]{Tam\'as L\'aszl\'o}
\address{Babe\c{s}-Bolyai University, Str. Mihail Kog\u{a}lniceanu nr. 1, 400084 Cluj-Napoca, Romania}
\email{tamas.laszlo@ubbcluj.ro}
\author[J. Mart\'in]{Jorge Mart\'in-Morales}
\address{Departamento de Matem\'aticas, IUMA\\
Universidad de Zaragoza\\
C.~Pedro Cerbuna 12\\
50009 Zaragoza, Spain}
\email{jorge.martin@unizar.es}
\author[A. N\'emethi]{Andr\'as N\'emethi}
\thanks{AN and TL are  partially supported by ``\'Elvonal (Frontier)'' Grant KKP 144148.
TL is partially supported by the  J\'anos Bolyai Research Scholarship of the Hungarian Academy of Sciences.
JM is supported by MCIN/AEI/10.13039/501100011033 and the European Union \emph{NextGenerationEU/PRTR} (grant
code: RYC2021-034300-I).
JC and JM are partially supported by PID2020-114750GB-C31, funded by
MCIN/AEI/10.13039/501100011033 and also partially funded by the Departamento de Ciencia,
Universidad y Sociedad del Conocimiento of the Gobierno de Arag\'on
(Grupo de referencia E22\_20R ``\'Algebra y Geometr\'{\i}a'').
JM is also supported by Junta de Andaluc\'ia (FQM-333).}
\address{HUN-REN Alfr\'ed R\'enyi Institute of Mathematics,
Re\'altanoda utca 13-15, H-1053, Budapest, Hungary \newline
 \hspace*{4mm} ELTE - University of Budapest, Dept. of Geometry, Budapest, Hungary \newline \hspace*{4mm}
  BBU - Babe\c{s}-Bolyai Univ., Str, M. Kog\u{a}lniceanu 1, 400084 Cluj-Napoca, Romania
 \newline \hspace*{4mm}
BCAM - Basque Center for Applied Math.,
Mazarredo, 14 E48009 Bilbao, Basque Country, Spain}
\email{nemethi.andras@renyi.hu }
\subjclass[2010]{Primary. 14B05, 32Sxx; Secondary. 14E15}
\keywords{Normal surface singularities, delta invariant of curves, rational surface singularities, cyclic quotient, Weil divisors, $r$--tuples}
\begin{document}

\begin{abstract}
We describe those Weil divisors of cyclic quotient surface singularities which are (abstract) $r$--tuple curve singularities.
\end{abstract}

\maketitle

\begin{quotation}
\begin{center}
\emph{
Dedicated to our friends Alejandro Melle and Enrique Artal.
}
\end{center}
\end{quotation}

\section{Introduction}

\subsection{}
The delta invariant $\delta(C,o)$ of a complex analytic reduced curve germ $(C,o)$
with $r$ local irreducible components satisfies $\delta(C,o)\geq r-1$. The optimal minimum
$r-1$ is realized by  {\it ordinary $r$--tuples}.  These germs, by definition,
are analytically isomorphic with the union of the coordinate axes of $(\CC^r,o)$ (for more details see section 2).

Our discussion targets the following problem: given a complex analytic  normal surface
singularity $(X,o)$, when is possible to embed an ordinary $r$--tuple $(C,o)$ in $(X,o)$
as a reduced Weil divisor. There are several immediate obstructions: for example, 
$r$ cannot be strict larger than the embedding dimension ${\rm embdim}(X,o)$ of $(X,o)$.
In some specific cases (see below)
this optimal bound is realized. But usually (and more frequently)
even the case $r=1$ is prohibited: $(X,o)$ contains no smooth curve germ.
For a discussion and more particular cases see section~\ref{sec:the-guiding-question}.

One can ask an even more complex question: if we fix a resolution $\widetilde{X}$ of $(X,o)$ (here, in fact, we can assume that $\widetilde{X}$ is minimal), and we consider the
irreducible exceptional curves $\{E_v\}_{v=1}^s$, then we can ask about the possible distribution of the intersection points of the strict transforms of an ordinary $r$--tuple $(C,o)\subset (X,o)$ with different components $E_v$.

In this note we will treat the case of a cyclic quotient singularity $(X,o)$. In this case
it is a well--known  that the optimal bound
$r={\rm embdim}(X,o)$ is realized. E.g., if we take a generic linear function on $(X,o)$, then its zero locus is an $({\rm embdim}(X,o)-1)$--tuple. Even more, if we add to this
curve another component, the projection of a smooth generic transversal cut in $\widetilde{X}$ intersecting an arbitrarily chosen $E_v$, then this union is an
${\rm embdim}(X,o)$--tuple. On the other hand, in \cite[Example 5.9]{kappa1} we found ordinary $r$--tuples  which are not parts of an ${\rm embdim}(X,o)$--tuple constructed
in this way.

These facts motivated us to search for a common generalization: to characterize those
generic transversal cuts of the exceptional irreducible components whose union projects 
into an  $r$--tuple.

The main result of the note is the following.

\begin{theorem} Fix the minimal resolution of a cyclic quotient singularity  $(X,o)$.
Assume that on each $E_v$  we put $r_v$ different transversal discs in $\widetilde{X}$
$(1\leq v\leq s)$. Then the projection
into $(X,o)$ forms a $(\sum_v r_v)$--tuple whenever the following inequality holds
for any $1\leq v_1\leq v_2\leq s$
\begin{equation*}
\sum_{v_1\leq v\leq v_2}r_v \leq
1 + \sum_{v_1\leq v\leq v_2} ( k_v-{\rm val}_v ).
\end{equation*}
Here $-k_v$ denotes the self-intersection of $E_v$ in $\widetilde{X}$,  and $\val_v$ is the valency of $v$.
\end{theorem}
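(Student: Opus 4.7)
The plan is to verify, under the stated inequalities, the identity $\delta(C,o) = r - 1$ for the image curve $C = \pi(\widetilde{C})$, where $r = \sum_v r_v$ and $\widetilde{C} = \bigsqcup_{v,i} D_{v,i}$. Since the transversal discs $D_{v,i}$ are pairwise disjoint and smooth in $\widetilde{X}$, $\widetilde{C}$ is smooth with exactly $r$ connected components, so $C$ has $r$ local branches each with smooth normalization. Hence the equality $\delta(C,o) = r - 1$ is precisely the condition for $C$ to be an ordinary $r$--tuple.

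First I would introduce the exceptional $\Q$--cycle $Z_C = \sum_v z_v E_v$ uniquely determined (by negative definiteness of the intersection form on $E$) by
$$Z_C \cdot E_v = -\widetilde{C} \cdot E_v = -r_v \qquad (1 \leq v \leq s),$$
so that $\pi^*C = \widetilde{C} + Z_C$ as $\Q$--divisors on $\widetilde{X}$. Along the chain this is the tridiagonal recurrence $z_{v-1} - k_v z_v + z_{v+1} = -r_v$ with boundary conditions $z_0 = z_{s+1} = 0$, which admits an explicit solution via the Hirzebruch--Jung continued fraction attached to $(X,o)$.

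Next I would invoke the delta--invariant formula for Weil divisors on rational surface singularities developed in~\cite{kappa1}, which expresses $\delta(C,o)$ in terms of $Z_C$, $K_{\widetilde{X}}$ and the intersection points $\widetilde{C}\cap E$. Since $(X,o)$ is a cyclic quotient (hence rational), the higher direct images $R^1\pi_*\mathcal{O}_{\widetilde{X}}(\,\cdot\,)$ appearing in the derivation collapse to purely combinatorial quantities in the $z_v$'s and $r_v$'s, so that $\delta(C,o) - (r-1)$ takes the form of an explicit non--negative functional of the data $(k_v, r_v, \val_v)$.

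The core of the proof is then to decompose this functional as a sum of non--negative contributions indexed by subchains $[v_1, v_2] \subseteq [1,s]$, each of which vanishes precisely when
$$\sum_{v_1 \leq v \leq v_2} r_v \;\leq\; 1 + \sum_{v_1 \leq v \leq v_2}(k_v - \val_v)$$
holds. The main difficulty lies in exhibiting this subchain decomposition, which I expect to require a telescoping argument applied to the tridiagonal recurrence for the $z_v$, or equivalently an Abel--type summation. An alternative, cleaner route is induction on the chain length $s$: peeling off an end vertex $E_1$ reduces the configuration on $(X,o)$ to a modified configuration on the shorter cyclic quotient determined by $\{E_2, \ldots, E_s\}$, and the subchain hypotheses starting at $v_1 = 1$ are exactly what is needed to make the induction go through.
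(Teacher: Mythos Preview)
Your approach is genuinely different from the paper's, and as written it has a real gap.

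The paper does \emph{not} compute $\delta(C,o)$ via the formula of~\cite{kappa1}. Instead, it builds the minimal embedding $(X,o)\hookrightarrow(\CC^{t+2},o)$ explicitly from the invariant monomials $g_0,\ldots,g_{t+1}$, realizes $\widetilde X$ by analytic plumbing charts, and then writes down the parametrization of each transversal disc $\widetilde D\subset\widetilde X$ in these charts. The crucial observation is that the tangent vector of $\pi(\widetilde D)$ in $\CC^{t+2}$ has nonzero entries only in those coordinates $g_j$ for which the $E_v$--multiplicity $m^j_v$ equals $1$; there are exactly $k_v-\val_v+1$ such indices, and consecutive columns $v,v+1$ share precisely one of them. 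Thus the span of all tangent vectors coming from discs on $E_{v_1},\ldots,E_{v_2}$ has dimension $1+\sum_{v_1\le v\le v_2}(k_v-\val_v)$, and the exponents appearing are consecutive integers, so distinctness of the cut points gives a nonvanishing (classical) Vandermonde minor. Linear independence of the tangent directions then yields the $r$--tuple conclusion by Hironaka's formula. The subchain inequalities appear for free as the rank constraints on this block--staircase matrix.

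Your plan, by contrast, proposes to compute $\delta(C,o)$ numerically and exhibit $\delta(C,o)-(r-1)$ as a sum of non--negative ``subchain'' contributions. The gap is twofold. First, you never actually produce this decomposition; you say you ``expect'' a telescoping or Abel--summation argument to work, but this is the entire content of the theorem and is left as a hope. Second, and more seriously, the $\delta$--formula of~\cite{kappa1} is not a function of the tuple $(r_v)$ alone: it involves analytic correction terms (base points, $h^1$ of twisted sheaves) that a priori depend on \emph{where} the transversal discs meet $E$. Your reduction ``$R^1\pi_*$ collapses to purely combinatorial quantities'' is exactly what needs to be proved, and it is essentially equivalent to the statement of the theorem. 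The paper sidesteps this by never computing $\delta$ at all: it shows directly that the branches have independent tangent lines in the canonical embedding, from which $\delta=r-1$ follows by the elementary Hironaka inequality.
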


\section{Preliminary. The delta invariant of curves and $r$--tuples}

\subsection{} In this section we review some facts  about the delta invariant of a complex analytic curve germ,
which are relevant from the point of view of the present note. In some parts
we follow~\cite{BG80} and~\cite{StevensThesis}.

Let $(C,o)$ be the germ of a complex reduced curve singularity, let $n:(C,o)^{\widetilde{}}\to(C,o)$
be the normalization, where $(C,o)^{\widetilde{}}$ is the multigerm $(\widetilde{C},n^{-1}(o))$.
The delta invariant is defined as $\dim_{{\CC}}n_*\cO_{(C,o)^{\widetilde{}}}/\cO_{(C,o)}$.
We also write $r$ for the number of irreducible components of $(C,o)$.
The delta invariant of a reduced curve can be determined inductively from the delta invariant of
the components and the \emph{Hironaka generalized intersection multiplicity}.
Indeed, assume $(C,o)$ is embedded in some $(\CC^n,o)$, and assume that $(C,o)$ is the union of
two (not necessarily irreducible) germs $(C_1',o)$ and $(C_2',o)$ without common irreducible components.
Assume that $(C_i',o)$ is defined by the ideal $I_i$ in $\cO_{(\CC^n,o)}$ ($i=1,2$).
Then one can define \emph{Hironaka's intersection multiplicity} by
$(C_1',C_2')_{\Hir}:=\dim _{\CC}\, (\cO_{(\CC^n,o)}/ I_1+I_2)$.
The following formula is due to Hironaka, see~\cite{Hironaka} or~\cite[2.1]{StevensThesis} and~\cite{BG80},
\begin{equation}\label{eq:deltaA1}
\delta(C,o)=\delta(C_1',o)+\delta(C_2',o)+(C_1',C_2')_{\Hir}.
\end{equation}
In particular, if $(C,o)$ has a decomposition in $r$ irreducible components,
then using  induction
 (and  $(C_1',C_2')_{\Hir}\geq 1$) one obtains
\begin{equation}\label{eq:deltaA2}
\delta(C,o)\geq r-1.\end{equation}

\subsection{Ordinary $r$--tuples} \label{ex:delta22} \cite{BG80,StevensThesis}
If a reduced curve  germ $(C,o)$ is analytically equivalent with the union of the coordinate
axes of $(\CC^r,o)$, then we call $(C,o)$ {\it ordinary $r$--tuple}. Using e.g.
Hironaka's formula we get that $\delta(C,o)=r-1$ for any $r$--tuple. This shows that ordinary $r$--tuples realize
the optimal minimum of the bound~\eqref{eq:deltaA2}. In fact, this property characterizes them:
if a curve $(C,o)$ with $r$ components satisfies $\delta(C,o)= r-1$, then $(C,o)$ is necessarily an
ordinary $r$--tuple.

\section{The guiding question}\label{sec:the-guiding-question}

\subsection{The question and first examples.}\label{ss:first} \

Let $(X,o)$ be a normal surface singularity, and fix a Weil
divisor $(C,o)$ of $(X,o)$. We wish to list cases when $(C,o)$ is an ordinary $r$--tuple. In other words,
we might ask the following question: when is it possible to embed an ordinary $r$--tuple as a Weil divisor
in a normal surface singularity? We start the discussion with some concrete examples.

First, let us consider some immediate obstructions. Let us fix a Weil divisor $(C,o)$ of a normal surface singularity
$(X,o)$, and also an embedded resolution $\pi:\widetilde{X}\to X$ of the pair $(C,o)\subset (X,o)$. Let $E:=\pi^{-1}(o)$
be the exceptional curve. Let $\{\widetilde{C}_i\}_{i=1}^r$ denote the strict transforms of the irreducible components
$\{C_i\}_{i=1}^r$ of~$(C,o)$.

If $(C,o)$ is an ordinary $r$--tuple then each $(C_i,0)$ is smooth, hence it has local multiplicity one at $o$.
That is, the generic linear function intersects $(C_i,o)$ at $o$ with multiplicity one. This can be tested on the
embedded resolution as well. Indeed, let $E_v$ be the irreducible exceptional divisor which intersects $\widetilde{C}_i$
and write $p_i:=\widetilde{C}_i\cap E_v$. Let $\pi^*(\mathfrak{m}_{(X,o)})$ be the pullback of the maximal ideal
$\mathfrak{m}_{(X,o)}$ of $\cO_{(X,o)}$ considered as an $\cO_{\widetilde{X}}$--module.
Then $\pi^*(\mathfrak{m}_{(X,o)})={\mathcal B}\cdot \cO_{\widetilde{X}}(-Z_{\max})$ for an ideal sheaf ${\mathcal B}$
of $\cO_{\widetilde{X}}$ supported in finitely many points (they sit on the exceptional curve $E$, and they are called
the \emph{base points} of $\pi^*(\mathfrak{m}_{(X,o)})$) and the divisorial part $\cO_{\widetilde{X}}(-Z_{\max})$.
The cycle $Z_{\max}$, usually called the maximal ideal cycle, is that part of the  divisor of the generic linear function
which is supported on $E$. If we denote by $Z_{\min}$ the minimal (Artin) fundamental cycle associated with the resolution,
then $Z_{\max}\geq Z_{\min}$.

Now, if $(C_i,o)$ is smooth, then (i) the $E_v$--coefficient of $Z_{\max}$ should be one and (ii) $p_i$ cannot be in
the support of ${\mathcal B}$. In general it is not very easy to determine the cycle $Z_{\max}$. But, for instance, if
the $E_v$--coefficient of the topological cycle $Z_{\min}$ is already larger than one, then $E_v$ cannot support the
strict transform of a smooth $(C_i,o)$.
Moreover, such properties can be tested in the minimal resolution (see e.g. Remark
\ref{rem:utolso}).
For example, the $\EE_8$ singularity contains no smooth irreducible Weil divisor.

On the other hand, condition (ii) is also restrictive. Consider for instance the hypersurface singularity
$(X,o)=\{x^2+y^3+z^7=0\}$ in $(\mathbb{C}^3,o)$ and its minimal good resolution. The strict transform of the Weil
divisor $(C,o)=\{z=0,\ x^2+y^3=0\}$ intersects transversally the $(-7)$ irreducible exceptional divisor
(say $E_v$) at a smooth point $p$ of $E$. In this case $Z_{\max}=Z_{\min}$ and the $E_v$--coefficient of $Z_{\min}$ is one.
(In fact, it is the only irreducible exceptional divisor whose coefficient in $Z_{\min}$ is one.)
However, the point $p$ is a base point of $\pi^*(\mathfrak{m}_{(X,o)})$, which is in agreement with the fact that
$(C,o)$ is not smooth.
On the other hand, any generic transversal cut (i.e. a smooth curve germ intersecting $E_v$ transversally)
$\widetilde{C}$ of $E_v$ (with $p\not\in \widetilde{C}$) projected via $\pi$ gives a smooth Weil divisor of~$(X,o)$.

Next, regarding this last example, one can ask: if we put two  transversal cuts $\widetilde{C}_1$ and $\widetilde{C}_2$
on $E_v$, and we project them  via $\pi$, is this curve $C_1\cup C_2$ an ordinary 2--tuple? That is, does the delta
invariant of this pair $C_1\cup C_2$ (with both components smooth) equal 1? The answer is no, this delta invariant is~2.

This can be seen (at least) in two different ways. The first one uses equations of the curve.
For example, $(C_1,o)$ can be given e.g. by $y=-z^2$ and $x=z^3\sqrt{1-z}=z^3(1-z/2-z^2/8+\cdots$, while
$(C_2,o)$ by $y=z^2$ and $x=iz^3\sqrt{1+z}=iz^3(1+z/2-z^2/8+\cdots$. (The fact that the strict transforms have the
required properties can be seen if we consider the weighted blow up with weights $(3,2,1)$
which creates $E_v$.) Then  $I_1+I_2=(x,y,z^2)$, hence $(C_1,C_2)_{\Hir}=2$.

Another possibility (which does not use concrete equations, which usually are hard to handle) is the following:
though the function $z$ has multiplicity 1 along $E_v$ the multiplicities of $x$ and $y$ are 3 and 2. So, both
curves  $C_1$ and $C_2$ embed into the three--space (via the  coordinates $x,y,z$) by equations of type
$x=a_it^3, \ y=b_it^2, \ z=c_it$ (with $a_i, b_i,c_i$ nonzero constants, $i=1,2$), hence they have common
tangent lines, which is not the case for an ordinary 2--tuple.

Both arguments hold because there is `only one function' (up to a nonzero constant)
which has multiplicity one along~$E_v$.

Let us analyze the case of $\AA_4$ with equation $(X,o)=\{z^5=xy\}$ as well. The minimal resolution has
4 exceptional irreducible divisors, all $(-2)$ curves, say $E_1,\ldots, E_4$. In this case $Z_{\min}=Z_{\max}=E$
and the maximal ideal has no base points. So any transversal cut on any of the $E_v$'s projects into a smooth curve.
But by the same argument as above, on the very same $E_v$ with $v=2$ or $v=3$,
one cannot put two transversal curves which projects on an ordinary 2--tuple. Furthermore,
a transversal cut on $E_2$ and a cut of $E_3$ project into a curve with two smooth components but with $\delta=2$.
Indeed, analyzing the multiplicities of all the coordinates,
one of them has parametrization of type  $x=a_1t^3, \ y=b_1t^2, \ z=c_1t$, the other
$x=a_2t^2, \ y=b_2t^3, \ z=c_2t$, hence, indeed, $(C_1,C_2)_{\Hir}=2$.
On the other hand, a transversal cut of $E_1$ and a cut of $E_4$ project on an ordinary 2--tuple
(it can be realized e.g. by $z=xy=0$).

Similarly, on $\AA_k$ with exceptional curves $E_1,\ldots, E_k$, one cut on any $E_v$ projects onto a smooth curve,
but a pair of cuts supported on $E_2, \ldots, E_{k-1}$ projects onto a curve with~$\delta\geq 2$.

Finally, note also that $(C,o)\subset (X,o)$ implies ${\rm embdim}(C,o)\leq {\rm embdim}(X,o)$.
In particular, if $(C,o)$ is an ordinary $r$--tuple then $r\leq {\rm embdim}(X,o)$.

In the case of $\AA_k$, for which ${\rm embdim}(X,o)=3$, the maximal value $r=3$ can be realized as an
ordinary $r$--tuple by a curve which is a projection of a transversal cut at $E_1$, $E_k$ and an arbitrary $E_v$.
However, for $\EE_8$, ${\rm embdim}(X,o)=3$ but $(X,o)$ contains no smooth curves as mentioned above.

\subsection{The generic linear section on minimal rational singularities}\label{s:embcurves}\

This section follows the general ideas about generic linear sections on minimal rational singularities and
ordinary $r$--tuples introduced in~\cite{StevensThesis} (see also \cite[Examples 5.2, 5.3]{kappa1}).

\subsubsection{} \label{bek:rat1}
Let us fix a rational surface singularity $(X,o)$ and its minimal resolution $\pi:{\tilde X}\to X$. We will
assume that the fundamental cycle $Z_{\min}$ is reduced:  $Z_{\min}=E$. (Such singularities are called
`rational minimal' in \cite{KSB}, they coincide also with the family of `rational Kulikov singularities'
\cite{Karras80,StevensThesis}.)

Furthermore, by \cite{Artin66}, the multiplicity $\mult(X,o)$ of $(X,o)$ is $-Z_{\min}^2$ and the embedded
dimension of $(X,o)$ is $-Z_{\min}^2+1$. Set $r:=-Z_{\min}^2$, and fix an embedding
$(X,o)\subset (\mathbb{C}^{r+1},o)$. Let $\ell$ be the generic linear function of $(\CC^{r+1},0)$, it induces
the `generic linear section'
of $(X,o)$. Set $(C,o):=\{\ell=0\}\cap (X,o)$.
It is well known (see e.g.~\cite{StevensThesis}) that \emph{$(C,o)$ is an ordinary $r$-tuple}.

Indeed, $(C,o)$ is embedded in
the linear hyperplane $\{\ell=0\}$ of $(\CC^{r+1},o)$, hence in some $\CC^r$. Furthermore, since the maximal
ideal of $\cO_{(X,o)}$ has no base point (in the minimal resolution), we  obtain that
$\widetilde{C}\subset {\tilde X}$ is a union of smooth discs transversal to $E$, and their number is
$r=-Z_{\min}^2=(\widetilde{C},Z_{\min})=(\widetilde{C},E)$.
Since $E=Z_{\min}$ intersects each disc $\widetilde{C}_i$ with multiplicity one, the corresponding
component $C_i=\pi(\widetilde{C}_i)$ of $(C,o)$ intersects $\ell$ with multiplicity one, hence it is smooth.
Furthermore, by the base point freeness, all the $r$ smooth components of $(C,o)$ in
$\{\ell=0\}\subset (\CC^r,0)$ are in general position. (One can also argue inductively as follows. Assume that
$\{\ell =0\}\cap (X,o)=\cup _{i=1}^r C_i$ as above and we already know that $\cup_{i=1}^kC_i$ is an ordinary
$k$--tuple. For any $i>k$ move $\widetilde{C}_i$ generically into $\widetilde{C}_i'$. Then
$\cup_{i=1}^kC_i\cup\cup_{i>k}C_i'$ is also cut out by a linear function, say $\ell'$. This also contains
$\cup_{i=1}^kC_i$, but intersects $C_{k+1}$ with multiplicity one. Hence, by Hironaka's formula and \ref{ex:delta22},
$\cup_{i=1}^{k+1}C_i$ is an ordinary $(k+1)$--tuple.) Hence $(C,o)$ is an ordinary $r$--tuple.
(For more see e.g.~\cite{StevensThesis}.)

\subsubsection{}\label{bek:rat2}
In fact, we can prove even more, cf. \cite[Example 5.3]{kappa1}. Let $\widetilde{D}^+$ be a disc, which intersects transversally (exactly)
one of the $E_v$'s at a point $p$, where $p\not\in \widetilde{C}\cap E$.
Set $(D^+,o):=\pi(\widetilde{D}^+,p)\subset (X,o)\subset (\CC^{r+1},o)$.
Then we claim that $(C\cup D^+,o)$ is an ordinary $(r+1)$--tuple in $(\CC^{r+1},o)$.
(Indeed, as above, since $(Z_{\min}, \widetilde{D}^+)=1$, the intersection multiplicity of $\ell$ with $D^+$ is one.
This shows that $D^+$ is smooth  and also that $(C,D^+)_{Hir}=1$. Then, by Hironaka's formula~\eqref{eq:deltaA1}
one obtains $\delta(C\cup D^+)=\delta(C)+(C,D^+)_{Hir}=r$, and the result follows by~\ref{ex:delta22}.)

\subsubsection{} \label{bek:rat3}
Note that, if we start with an arbitrary collection
$\widetilde{C}_{Z_{\min}}=\cup_{i=1}^r\widetilde{C}_i$ of smooth discs of ${\tilde X}$, transversal to $E$, such that
$(\widetilde{C}_{Z_{\min}}, E_v)=-(Z_{\min},E_v)$ for every $v$, then (since ${\rm Pic^0}({\tilde X})=0$),
there exists a function  $\ell $ as above such that $\pi(\widetilde {C}_{Z_{\min}})=\{\ell=0\}\cap(X,o)$,
hence $\pi(\widetilde{C}_{Z_{\min}})$ is an
ordinary $r$--tuple. Moreover, one can add an arbitrary transversal $\widetilde{D}^+$ anywhere generically along $E$,
then  $\pi(\widetilde{C}_{Z_{\min}}\cup \widetilde{D}^+) $ is an ordinary $(r+1)$--tuple.
Clearly, any subset of $k$ components of  $\pi(\widetilde{C}_{Z_{\min}}\cup \widetilde{D}^+) $ is an ordinary $k$--tuple.
\subsubsection{}\label{bek:rat4}
It turns  out that besides curves of type $\pi(\widetilde{C}_{Z_{\min}}\cup \widetilde{D}^+) $ constructed as above in \ref{bek:rat2}
and \ref{bek:rat3}, there
are other collections (not subsets of $\pi(\widetilde{C}_{Z_{\min}}\cup \widetilde{D}^+) $ type curves) which are still
ordinary $k$--tuples, see subsection \ref{eq:sh}. Our next goal is to list all the possibilities in the case of cyclic quotient singularities, using a more subtle method (versus the above
generic hyperplane section property).

Before we start the next construction let us stress again the following fact (compare also with \ref{ss:first}).
In the argument used above (when we used
the pair $\ell$ and $\ell'$), it was helpful to consider several  linear functions. In order to `test' a disc in
${\widetilde X}$, transversal to $E_v$, we need functions, whose divisor have $E_v$--multiplicity one.
In particular, what we need to understand is the set of family of functions on $(X,o)$
whose divisors on ${\widetilde X}$ have multiplicity one along at least one component of $E$.

Though the  next construction can be done more generally, here we present it for cyclic
quotient  singularities, when we can compare it with the classical theory of such singularities.

\section{Cyclic quotient singularities}

\subsection{Notation}\label{ss:NOT}\

Let us fix a cyclic quotient singularity $(X,o)$. It can be characterized by several different facts,
here we will use simultaneously a few of them (for details see e.g. \cite{Hirzebruch,BPV-book} or~\cite[2.3]{NBook}).

\begin{enumerate}[label=(\alph*)]
 \item\label{cyclic:1}
 $(X,o)$ is isomorphic with the normalization of $(\{xy^{n-q}=z^n\},0)\subset (\CC^3,0)$, where $0<q<n$, $(n,q)=1$;
 \item\label{cyclic:2}
 $(X,o)$ is the quotient singularity $(\mathbb{C}^2,o)/\mathbb{Z}_n$ of $\mathbb{Z}_n=\{\xi\in\mathbb{C}\,:\, \xi^n=1\}$,
where the action is $\xi*(z_1,z_2)=(\xi z_1,\xi^q z_2)$ (where $(n,q)$ is as in~\ref{cyclic:1});
 \item\label{cyclic:3}
 $(X,o)$ is the germ at $o$ of ${\rm Spec}\, \mathbb{C}[\sigma^\vee\cap \mathbb{Z}^2]$, where $\sigma^\vee$ is the real cone
generated in $\mathbb{R}^2$ by the vectors $(1,0)$ and $(q,n)$ (where $(n,q)$ is as in~\ref{cyclic:1});
 \item\label{cyclic:4}
 $(X,o)$ is a surface singularity whose minimal resolution dual graph is a string (or bamboo) whose vertices have all genus zero.
\end{enumerate}

\vspace{2mm}

Regarding part~\ref{cyclic:2}, the invariant ring $\mathbb{C}[z_1,z_2]^{\mathbb{Z}_n}$ is generated by several
elements which will be discussed below, here we mention only three of them: $x=z_2^n$, $y=z_1^n$ and $z=z_1^{n-q}z_2$,
they satisfy $xy^{n-q}=z^n$ (the equation in~\ref{cyclic:1}).
Below we will exploit several further connections between these equivalent characterizations.

Regarding part~\ref{cyclic:4}, we denote the irreducible exceptional divisors of the minimal resolution by
$E_1, \ldots, E_s$. The (reduced) strict transform of $x=0$ (lifted to the normalization) is a transversal cut of $E_1$,
it will be denoted by $\widetilde{S}$, and the (reduced) strict transform of $y=0$ (lifted to the normalization) is a
transversal cut of $E_s$, it will be denoted by $\widetilde{S}'$. The self-intersecion numbers $-k_v\leq -2$ of the
irreducible exceptional divisors $E_v$ are given by the Hirzebruch-Jung continued fraction of $n/q=[k_1,\ldots, k_s]$.
Using the graph and the self-intersections we have the intersection form $(\,,\,)$ on the lattice
$L=\mathbb{Z}\langle E_v\rangle_v$. We write
$L':=\{l'\in L\otimes\mathbb{Q}\,:\, (l',l)\in \mathbb{Z} \ \mbox{for any $l\in L$}\}$
for the lattice of `dual' rational cycles supported on $E$. It is generated by the dual base elements
$\{E^*_v\}_v$ defined by $(E^*_v,E_w)=-\delta _{v,w}$ (the negative of Kronecker delta) for all $v$ and $w$. Their geometric meaning
is the following:
If $\widetilde{C}_v $ is a generic transversal cut of $E_v$, then (the rational divisor) $\widetilde{C}_v+E^*_w $
intersects any element of $L$ trivially. We also note that $L\subset L'$ and $L'/L$ can be identified with the first
homology $H$ of the link of $(X,o)$, which is $\mathbb{Z}_n$.

\subsection{A family of ordinary $r$--tuples embedded in a cyclic quotient singularity}\label{eq:sh} \

Since cyclic quotient singularities are minimal rational, the construction from~\ref{s:embcurves} holds for them,
hence in that way we know that $\pi(\widetilde{C}_{Z_{\min}}\cup \widetilde{D}^+) $ is an $(r+1)$--tuple in $(X,o)$,
where $\widetilde{C}_{Z_{\min}}+E$ intersects any $E_v$ trivially, and $r=-Z_{\min}^2=-E^2$.

Next, we recall another family of reduced  Weil divisors, which form ordinary $r$--tuples,
realized by a completely different construction.

First, we recall some general facts, valid for any resolution (when the link is a rational homology sphere, that is,
the graph $\Gamma$ is a tree and all the genera are zero). For details the reader is referred to~\cite{NBook}.
Let us consider  the rational Lipman cone $\mathcal {S}':=\{l'\in L'\,:\, (l',E_v)\leq 0\ \mbox{for all $v$}\}$.
It is known that if $s'\in\mathcal{S}'\setminus \{0\}$ then all the $E_v$-coefficients of $s'$ are strictly positive.
Moreover, $\mathcal {S}'$ is generated over $\mathbb{Z}_{\geq 0}$ by the cycles $\{E_v^*\}_v$. Let us denote by $[l']$
the class of $l'\in L'$ in $H=L'/L$. Then, for any $h\in H$ there exists a unique minimal element
$s_h\in \mathcal{S}'$ such that $[s_h]=h$. Here minimality is considered with respect to the partial ordering:
$l'\leq l''$ ($l',l''\in L'$) if and only if all the $E_v$--coefficients satisfy $l'_v\leq l''_v$.

For example, $s_0=0$, but $s_h\not=0$  for  any $h\not=0$. In particular, for $h\not=0$, any $s_h$ can be written as
$s_h=\sum_v a_v E^*_v$ for some $a_v\in\mathbb{Z}_{\geq 0}$, not all zero. Associated with this description, we consider
$\widetilde{C_h}$ in $\widetilde{X}$: it has $r:=\sum_va_v$ irreducible components, $a_v$ of them constitute generic
transversal cuts of $E_v$. (Hence $\widetilde{C_h}$ depends on the choice of these cuts, but what we will claim for
$\widetilde{C_h}$ will be independent of this choice).

Let $(C_h,o)$ be $\pi(\widetilde{C_h})$ in $(X,o)$ associated with $h\in H\setminus \{0\}$ (we call such a curve $(C_h,o)$
a \emph{minimal generic $h$-curve}).
Minimal generic $h$-curves were introduced in~\cite{CLMN-santalo} in the context of surface singularities with
a $\QQ HS^3$ link.

In \cite[Thm. 5.1]{CLMN-santalo} (see also \cite[Thm. 0.1, 0.2]{CM}) it is shown that $\delta(C_h,o)=r-1$,
hence $(C_h,o)$ is an $r$--tuple.

In fact, this result is true not only for cyclic singularities but for a more general class including the especial
singularities $\EE_6$, $\EE_7$ (see \cite[Thm. 5.1]{CLMN-santalo}).

\subsubsection{}

For an arbitrary graph it is not simple to determine the cycles $s_h$. For cyclic quotients (and star shaped graphs) it is determined in
\cite{NemOSZ}, see also \cite{NBook}. Here, for the cyclic quotient case,  we follow the
 algorithm  from  \cite[10.3.3]{NemOSZ}, which provides the coefficients $\{a_v\}_{v=1}^s$.

 In the cyclic quotient case, $H=\mathbb{Z}_n$, it is generated by $[E_s^*]$, hence any $h$ has the form
 $a[E^*_s]$ for some $0\leq a<n$. Accordingly,  for
   any $a$, $0\leq a<n$,
   we search for the elements $s_{a[E^*_s]}$ and their coefficients $\{a_v\}_v$ (they depend on $h$ as well).

By  \cite[10.3.3]{NemOSZ},  all the possible   entries $(a_1,\ldots , a_s)$, depending on $h$,
  can be  generated inductively as follows. (For a different formula see \cite{CM}.)

We start with the entries $(k_1-1, k_2-2, \ldots, k_s-2)$. This $s$--tuple  represents  $s_{(n-1)[E^*_s]}$.
This means that $[(k_1-1)E^*_1+\sum_{v\geq 2} (k_v-2)E^*_v]=(n-1)[E^*_s]$ in $H=\mathbb{Z}_n$, and it is the minimal lift of
$(n-1)[E^*_s]$ into $\mathcal{S}'$.

Then  we determine inductively the $\{a_v\}_v$ system of $a-1$ from the entries of $a$.

If $(a_1, \ldots, a_s)$ represents $s_{a[E^*_s]}$,
 then the entries of $s_{(a-1)[E^*_s]}$ are determined as follows. If $a_s>0$ then the $s$--tuple for $a-1$ is
$(a_1, \dots, a_{s-1}, a_s-1)$.  If $a_u=a_{u+1}=\cdots =a_s=0$, but $a_{u-1}\not=0$,  then the $s$--tuple of $a-1$ is
$(a_1,\ldots , a_{u-1}-1, k_u-1, k_{u+1}-2, \ldots, k_s-2)$.
The last term is $(0,\ldots, 0)$ (corresponding to $a=0$) when we stop.

\begin{example}\label{ex:1511:1}
Assume that $n/q=15/11=[2,2,3,2,2]$. Then this algorithm produces (in the next order) the $5$--tuples
$$(1,0,1,0,0), \  (1,0,0,1,0), \ (1,0,0,0,1), \ (1,0,0,0,0), \ (0,1,1,0,0), $$
$$(0,1,0,1,0), \ (0,1, 0,0,1), \  (0,1,0,0,0), \  (0,0,2,0,0), \ (0,0,1,1,0), $$
$$(0,0, 1,0,1), \ (0,0,1,0,0),\ (0,0,0,1,0), \ (0,0,0,0,1), \ (0,0,0,0,0). $$

Note that this family in general  is not the same as  $\pi(\widetilde{C}_{Z_{\min}}\cup \widetilde{D}^+) $ constructed in
   \ref{s:embcurves}. In the above lists all the curves have one or two components, while
   $\pi(\widetilde{C}_{Z_{\min}}\cup \widetilde{D}^+) $
   has four. On the other hand, $(0,1,0,1,0)$ is in the above list, but it cannot be represented as a subset of
     $\pi(\widetilde{C}_{Z_{\min}}\cup \widetilde{D}^+) $. (In this case $Z_{\min}=E=E_1^*+E^*_3+E^*_5$, hence in the above language it is $(1,0,1,0,1)$,
     and  $\pi(\widetilde{C}_{Z_{\min}}\cup \widetilde{D}^+) $ is  $(1,0,1,0,1)$ completed by an additional 1 at any position.)
\end{example}

The goal of the next parts --- the main results of the note --- is to provide a common generalization
of these two families; in fact,
{\it we give  a complete description of the $r$--tuple Weil divisors in cyclic quotient singularities. }

\section{Cyclic quotient singularities. The new results.}
\subsection{The `canonical'  embedding of the cyclic quotient singularities}\label{ss:class}\

In this subsection we recall some well--known facts regarding cyclic quotient singularities, what we will need in the main construction,
see e.g. \cite[2.3]{NBook}.

Consider the cyclic quotient singularity $(X,o)=(\CC^2,o)/\ZZ_n$ with the action
$\xi*(z_1,z_2)=(\xi z_1,\xi^qz_2)$, where $0<q<n$ and $(q,n)=1$.
Then $\cO_{(X,o)}=\CC[z_1,z_2]^{\ZZ_n}$, is the ring of invariants.
The invariant monomials $z_1^Nz_2^M$, determined by $N+qM\equiv 0 \ ({\rm mod} \ n)$ play a key role.
Furthermore, the minimal set of such invariant monomials `canonically' generate
the invariant ring, producing a `canonical' way to embed $(X,o)\subset (\CC^{r+1},o)$.

Let us first describe how we represent an invariant monomial in the minimal resolution.
The following diagram shows the divisor of $\pi^*(z_1^Nz_2^M)$ on $\tX$, where $m_v$ are the vanishing orders
along the irreducible exceptional curves $E_v$,  the left (resp. the right) arrow represents the reduced strict transform
$\widetilde{S}$ (resp. $\widetilde{S}'$)
of $\{x=z_2^n=0\}$ (resp. of $\{y=z_1^n=0\}$), and the multiplicities $m_0$ and $m_{s+1}$ represent
the vanishing orders of $\pi^*(z_1^Nz_2^M)$ along $\widetilde{S}$  and $\widetilde{S}'$, that is,
 $m_0=M$ and $m_{s+1}=N$.

\begin{picture}(300,40)(10,10)
\put(112,25){\makebox(0,0)[r]{$(m_0)$}}
\put(300,25){\makebox(0,0)[l]{$(m_{s+1})$}}
\put(150,35){\makebox(0,0){$-k_1$}}
\put(190,35){\makebox(0,0){$-k_2$}}
\put(260,35){\makebox(0,0){$-k_s$}}
 \put(150,15){\makebox(0,0){$(m_1)$}}
\put(190,15){\makebox(0,0){$(m_2)$}}
\put(260,15){\makebox(0,0){$(m_s)$}}
 \put(150,25){\circle*{4}}
\put(190,25){\circle*{4}} \put(260,25){\circle*{4}}
\put(210,25){\vector(-1,0){90}} \put(240,25){\vector(1,0){50}}
\put(225,25){\makebox(0,0){$\cdots$}}
\end{picture}

\vspace{3mm}

\noindent
The integers  $\{m_v\}_{v=0}^{s+1}$ satisfy
\begin{equation}\label{eq:conditions}\left\{\begin{array}{l}
m_v\geq 1 \ (1\leq v\leq s),  \  m_0\geq 0,  \  \textrm{and} \   m_{s+1}\geq 0,\\
m_{v-1}-k_vm_v+m_{v+1}=0 \ \ \mbox{for all $1\leq v\leq s$}.
\end{array}\right.
\end{equation}
\noindent
We call such a collection of integers  $\{m_v\}_{v=0}^{s+1}$ satisfying (\ref{eq:conditions}) a \emph{multiplicity system}.
Given a multiplicity system $\{m_v\}_{v=0}^{s+1}$, using the conditions in~\eqref{eq:conditions}, the
entries $(m_0,m_{s+1})$ (as well as $(m_0,m_1)$) uniquely determine~$\{m_v\}_{v=0}^{s+1}$.

Let us recall next how one finds the minimal set of  generators of $\CC[z_1,z_2]^{\ZZ_n}$.

Recall that   $\sigma^{\vee}$ denotes the cone ${\RR_{\geq 0}}\langle (1,0), (q,n)\rangle$.
Then
$(X,o)$ is realized as the toric singularity $\Spec \CC[\sigma^{\vee}\cap \ZZ^2]$, that is
$\CC[z_1,z_2]^{\ZZ_n}= \CC[\sigma^{\vee}\cap \ZZ^2]$. Furthermore, consider the convex hull of the
lattice points of $\sigma^{\vee}\cap \ZZ^2\setminus \{0\}$. Its boundary will consist of bounded edges (a finite number of
edges connecting $(1,0)$ and $(q,n)$) and two unbounded edges (the rays  $\{(1,0)\}+\RR_{\geq 0}\langle (1,0)\rangle$
and $\{(q,n)\}+\RR_{\geq 0}\langle (q,n)\rangle$). Consider the sequence of lattice points
$\{\mathfrak{p}_j\}_{j=0}^{t+1}:=\{(p_1^j, p_2^j)\}_{j=0}^{t+1}$ on the bounded edges, say $(p_1^0,p_2^0)=(1,0)$, $(p_1^1,p_2^1)=(1,1)$, and
$(p_1^{t+1}, p_2^{t+1})=(q,n)$. Interpret the vectors $(p_1^j,p_2^j)$ as $(m^j_1, m^j_0)$.
Then, it turns out that  each pair $(m^j_1, m^j_0) $ can be completed uniquely to a multiplicity system $\{m_v^j\}_{v=0}^{s+1}$.

A short way to see this is the following.
The entries $(m^j_0,m^j_{s+1})$ can be recovered as the scalar products $\langle (p^j_1,p^j_2), (0,1)\rangle $ and
 $\langle (p^j_1,p^j_2), (n,-q)\rangle$  respectively, both are non-negative, at least one of them is positive,   and
  the monomial $g_j:= z_2^{m^j_0} z_1^{m^j_{s+1}}$ is invariant,
hence it belongs to the ring $\cO_{(X,o)}$ of $(X,o)$, and its divisor is given by the multiplicity system
$\{m^j_v\}_{v=0}^{s+1}$.

In fact, even more is true: all the entries of $\{m^j_v\}_{v=0}^{s+1}$ can be recovered from the toric combinatorics as  follows. Let $\sigma
=\RR_{\geq 0}\langle (n,-q), (0,1)\rangle$ be the dual cone of $\sigma^\vee$ (see also Remark \ref{remarkab}).
Then similarly as we constructed the lattice points $\{\mathfrak{p}_j\}_{j=0}^{t+1}$ in the cone $\sigma ^\vee$, we can construct the
points $\{\mathfrak{e}_v\}_{v=0}^{s+1}$ in $\sigma$. Then $\mathfrak{e}_0=(0,1)$ corresponds to $\widetilde{S}$,
 $\mathfrak{e}_1=(1,0)$ corresponds to $E_1$,  $\mathfrak{e}_v$ corresponds to $E_v$ ($1\leq v\leq s$), and
  $\mathfrak{e}_{s+1}=(n,-q)$ corresponds to $\widetilde{S}'$. Then the multiplicity $m_{v}^j$ equals
  the scalar product $\langle \mathfrak{p}_j,\mathfrak{e}_v\rangle$.
  (That is, the lattice points  $\{\mathfrak{p}_j\}_{j=0}^{t+1}$ correspond to the  invariant monomial  generators $\{g_j\}_j$,  while
   the lattice points
$\{\mathfrak{e}_v\}_{v=0}^{s+1}$ to the irreducible divisors $\widetilde{S}, E_1,\ldots , E_s, \widetilde{S}'$.)

For instance, the invariant monomials $g_0=z_1^n$, $g_1=z_1^{n-q}z_2$, and
$g_{t+1}=z_2^n$ correspond to the lattice points $(1,0), \ (1,1), \ (q,n)$ and are represented as follows:

\begin{picture}(300,40)(-30,10)
\put(72,25){\makebox(0,0)[r]{$\pi^*(y:=g_0=z_1^n)$}}
\put(112,25){\makebox(0,0)[r]{$(0)$}}
\put(300,25){\makebox(0,0)[l]{$(n)$}}
\put(150,15){\makebox(0,0){$(1)$}}
\put(260,15){\makebox(0,0){$(q')$}}
\put(150,25){\circle*{4}}
\put(190,25){\circle*{4}} \put(260,25){\circle*{4}}
\put(210,25){\vector(-1,0){90}}
\put(240,25){\vector(1,0){50}}
\put(225,25){\makebox(0,0){$\cdots$}}
\end{picture}

\begin{picture}(300,40)(-30,10)
\put(72,25){\makebox(0,0)[r]{$\pi^*(z:=g_1=z_1^{n-q}z_2)$}}
\put(112,25){\makebox(0,0)[r]{$(1)$}}
\put(300,25){\makebox(0,0)[l]{$(n-q)$}}
\put(150,15){\makebox(0,0){$(1)$}}
\put(150,25){\circle*{4}}
\put(190,25){\circle*{4}} \put(260,25){\circle*{4}}
\put(210,25){\vector(-1,0){90}} \put(240,25){\vector(1,0){50}}
\put(225,25){\makebox(0,0){$\cdots$}}
\end{picture}

\begin{picture}(300,40)(-30,10)
\put(72,25){\makebox(0,0)[r]{$\pi^*(x:=g_{t+1}=z_2^n)$}}
\put(112,25){\makebox(0,0)[r]{$(n)$}}
\put(300,25){\makebox(0,0)[l]{$(0)$}}
\put(150,15){\makebox(0,0){$(q )$}}
\put(260,15){\makebox(0,0){$(1)$}}
\put(150,25){\circle*{4}}
\put(190,25){\circle*{4}} \put(260,25){\circle*{4}}
\put(210,25){\vector(-1,0){90}} \put(240,25){\vector(1,0){50}}
\put(225,25){\makebox(0,0){$\cdots$}}
\end{picture}

\vspace{2mm}
\noindent
where $0<q'<n$ satisfies $qq'\equiv 1\ ({\rm mod}\ n)$.

These functions satisfy the identity $xy^{n-q}=z^n$, hence there is a map
$\tX\to Y:=\{xy^{n-q}=z^n\}\subset \CC^3$. Note that $(Y,o)$ is not normal, its normalization
is exactly $(X,o)$, in the normalization we have to add (at least) all the invariant monomials~$\{g_j\}_{j=0}^{t+1}$.

One of the claim of the theory of cyclic quotients  is that $\{g_j\}_{j=0}^{t+1}$ constitute a minimal set of generators of the invariant ring,
hence they embed $(X,o)$ into $\CC^{t+2}$. They are the coordinates of $\CC^{t+2}$. Their number is
$t+2=-Z_{\min}^2+1=r+1$. Since $Z_{\min}=\min_{j} \div_E(g_j)=E$, at every vertex $v$
there should exist a generator $g_j$ whose multiplicity $m^j_v=1$. In fact,
for any $g_j$, at least one of the terms of its multiplicity sequence should be 1.
This can be seen as follows. By~\cite{Artin66}, for any function $f\in \cO_{(X,o)}$ one has
that $f\in\m_{(X,o)}^2$ if and only if $\div_E(\pi^*f)\geq 2E$. Hence, if for some fixed $g_j$ one had
$m^j_v\geq 2$ for all $v=1,\ldots,s$, then $g_j\in \m_{(X,o)}^2$ and hence it could be eliminated from
the functions defining the minimal embedding. But, as we already said, all functions $\{g_j\}_{j=0}^{t+1}$
are needed for the minimal embedding since $t+2=-Z_{\min}^2+1={\rm embdim}(X,o)$.

\subsection{A new way to generate the functions $\{g_j\}_j$}\label{ss:new}\

In the next discussion we will present a rather  different algorithm, which provides
the coordinates $\{g_j\}_j$ of $(\mathbb{C}^{t+2},o)$ via their multiplicity system (versus the lattice point search in $\sigma^{\vee}$
described in the previous subsection).
The starting point aims  to find all multiplicity systems, which have at least one entry equal to 1. (We already know that the multiplicity systems of the invariant monomials have this property, here we ask if they are all of them. It turns out that indeed, there are no other systems. This is a very special property of cyclic quotients, for more general singularities this is not the case.)
The new picture will have several additional important outputs as well.

First, for any fixed $E_u$, let us recall the following ideal filtration of $\cO_{(X,o)}$:
$$
\cF(mE_u)=\{f\in\cO_{(X,o)} \mid \div_E(\pi^*(f))\geq mE_u\}.
$$
This filtration appeared in a more general setting in~\cite[\S~2.7.]{kappa2}.
Note that $\cF((m+1)E_u)\subset\cF(mE_u)$ and $\cF(mE_u)/\cF((m+1)E_u)$ is a finite dimensional vector space.
Note also that $\cF(E_u)=\{f\in\cO_{(X,o)} \mid \div_E(\pi^*(f))\geq E\}=\m_{(X,o)}$.

\begin{prop}
\label{prop:dim}
For any fixed vertex $u$ of the minimal resolution graph of  a rational surface singularity $(X,o)$
(with Euler number $-k_u$ and valency $\val_u$)
$$
\dim_\CC\cF(E_u)/\cF(2E_u)=k_u-\val_u+1.
$$
\end{prop}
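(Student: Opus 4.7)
The plan is to realize the two filtration terms as global sections of line bundles on $\tX$, and to extract the dimension of the quotient from a short exact sequence on the rational component $E_u \cong \PP^1$.

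First I would identify $\cF(E_u) = H^0(\tX, \cO_{\tX}(-E))$ and $\cF(2E_u) = H^0(\tX, \cO_{\tX}(-E - E_u))$. The equality $\cF(E_u) = \m$ is already recorded in the text; by Artin, the maximal ideal has no base points in the rational minimal (in particular cyclic quotient) setting, so $\m \cdot \cO_{\tX} = \cO_{\tX}(-Z_{\min}) = \cO_{\tX}(-E)$. For the second identification, any $f \in \cF(2E_u)$ lies in $\m$, hence $\div_E(\pi^* f) \geq E$; together with the requirement that the $E_u$-coefficient be at least two, this forces $\div_E(\pi^* f) \geq E + E_u$.

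Next I would apply the short exact sequence of sheaves on $\tX$
\begin{equation*}
0 \longrightarrow \cO_{\tX}(-E - E_u) \longrightarrow \cO_{\tX}(-E) \longrightarrow \cO_{\tX}(-E)\big|_{E_u} \longrightarrow 0,
\end{equation*}
whose third term is a line bundle on $E_u \cong \PP^1$ of degree $-E \cdot E_u = k_u - \val_u \geq 0$, i.e.\ $\cO_{\PP^1}(k_u - \val_u)$. Taking cohomology together with the vanishings $H^1(\tX, \cO_{\tX}(-E)) = 0$ (classical for rational singularities, via $0 \to \cO_{\tX}(-E) \to \cO_{\tX} \to \cO_E \to 0$ and $H^0(\cO_E) = \CC$) and $H^1(\PP^1, \cO(k_u - \val_u)) = 0$, I obtain
\begin{equation*}
0 \longrightarrow \cF(2E_u) \longrightarrow \cF(E_u) \longrightarrow H^0(\PP^1, \cO(k_u - \val_u)) \longrightarrow H^1(\tX, \cO_{\tX}(-E - E_u)) \longrightarrow 0.
\end{equation*}

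The crux is then to show the rightmost $H^1$ vanishes; once established, $\cF(E_u)/\cF(2E_u) \cong H^0(\PP^1, \cO(k_u - \val_u))$, of dimension $k_u - \val_u + 1$, and the proposition follows. To prove the vanishing I would use $0 \to \cO_{\tX}(-E - E_u) \to \cO_{\tX} \to \cO_{E + E_u} \to 0$ together with $H^1(\tX, \cO_{\tX}) = 0$ to reduce the question to surjectivity of the restriction map $\cO_{X,o} \to H^0(\cO_{E + E_u})$. In the cyclic quotient setting this can be verified directly by a combinatorial toric check: the invariant monomial generators $\{g_j\}$ of Subsection \ref{ss:class}, whose leading behaviour along $E_u$ is controlled by the pairings $\langle \mathfrak{p}_j, \mathfrak{e}_v \rangle$ for $v = u$ and its neighbours, produce enough sections to span the finite-dimensional target $H^0(\cO_{E + E_u})$. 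The main obstacle is this surjectivity / $H^1$-vanishing step; once it is in place, what remains is a Riemann--Roch count on $\PP^1$.
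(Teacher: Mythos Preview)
Your overall architecture matches the paper's: identify $\cF(E_u)$ and $\cF(2E_u)$ with $H^0(\cO_{\tX}(-E))$ and $H^0(\cO_{\tX}(-E-E_u))$, use the short exact sequence restricting to $E_u\cong\PP^1$, and finish by Riemann--Roch once the two $H^1$'s vanish. The gap is in the vanishing of $H^1(\tX,\cO_{\tX}(-E-E_u))$.

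The proposition is stated for an arbitrary rational surface singularity, not only for cyclic quotients. Your route---reduce via $0\to\cO_{\tX}(-E-E_u)\to\cO_{\tX}\to\cO_{E+E_u}\to 0$ to surjectivity of $\cO_{X,o}\to H^0(\cO_{E+E_u})$, then verify this ``by a combinatorial toric check'' with the invariant monomials $g_j$---only covers the cyclic case, and in the paper's order of exposition would be circular anyway, since the relevant properties of the $g_j$ are developed \emph{after} this proposition (and partly rely on it). The paper's argument works uniformly for all rational singularities: starting from $E+E_u$, Laufer's algorithm produces a computation sequence $\{x_i\}$ with $(x_i,E_{v(i)})=1$ at every step, terminating at $s(E+E_u)\in\cS$; along such a sequence $h^1(\cO_{\tX}(-x_i))$ is constant, and at the endpoint it vanishes by Lipman's vanishing theorem. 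This is the missing idea for the general rational case.

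A smaller point: your justification for $\cF(E_u)=H^0(\tX,\cO_{\tX}(-E))$ via base-point freeness in the ``rational minimal'' setting is off target and does not match the generality of the statement (the proposition does not assume $Z_{\min}=E$). The correct reason, valid for any normal surface singularity, is that every nonzero element of the Lipman cone $\cS'$ has all $E_v$-coefficients strictly positive; hence $\div_E(\pi^*f)\geq E_u$ already forces $\div_E(\pi^*f)\geq E$, and similarly $\div_E(\pi^*f)\geq 2E_u$ forces $\div_E(\pi^*f)\geq E+E_u$.
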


\begin{proof}
Note that
$$
\cF(E_u)/\cF(2E_u)=\frac{H^0(\tX,\cO_{\tX}(-E))}{H^0(\tX, \cO_{\tX}(-E-E_u))}.
$$
Our purpose is to calculate the dimension of the right hand side.
For this, consider the exact sequence
$$0\to \cO_{\tX}(-E_u-E)\to  \cO_{\tX}(-E)\to  \cO_{E_u}(-E)\to 0.$$
We claim that $h^1(\cO_{\tX}(-E))=
h^1(\cO_{\tX}(-E_u-E))=0$. The first part follows from the exact sequence
$0\to \cO_{\tX}(-E)\to  \cO_{\tX}\to  \cO_{E}\to 0$ and from  $h^1(\cO_{\tX})=p_g(X,0)=0$, $H^1(\cO_E)\simeq\CC$.
For the second part note that by Laufer's algorithm there exists a computation sequence $\{x_i\}_i$ connecting
$E+E_u$ with $s(E+E_u)$ (cf.~\cite[\S~2.2.1]{kappa2}) such that $(x_i, E_{v(i)})=1$ at every step.
Then, $h^1$ is stable along the sequence, and $h^1(\cO_{\tX}(-s))=0$ for any $s\in \cS$ by
the Lipman Vanishing Theorem~\cite{Lipman}. Hence
$$\dim \frac{H^0(\tX,\cO_{\tX}(-E))}{H^0(\tX, \cO_{\tX}(-E-E_u))}=\chi(\cO_{E_u}(-E))=-(E_u,E)+1=k_u-\val_u+1.$$
\end{proof}

From this it is not clear a priori that the class of invariant monomials might generate (freely) this space.
But the following construction will show exactly this fact.

As a first step we find all the  multiplicity systems $\{m_v\}_{v=0}^{s+1}$ with $m_u=1$. (Next it is convenient to assume that $s\geq 2$, otherwise the indexing should be adapted.)

If $m_u=1$ then   $m_{u-1}+m_{u+1}=k_u$.
If $u=1$, then we can take for $m_{u-1}=m_0$ the values $0, 1, \ldots , k_u-1$ (since $m_{u+1}\geq 1$). If $u$ is not an end, then
$m_{u-1}$ can be  $1, \ldots , k_u-1$. Any such choice can be complete in a unique  way to a multiplicity system by the following lemma
(applied for $u=w$ with $m_u=1$).

\begin{lemma}
\label{lemma:auxmv}  Let us fix a vertex $w$, $1\leq w\leq s$.

\begin{enumerate}[label=(\alph*)]
 \item\label{mult_sequence:1}
 Assume that a pair $(m_w,m_{w+1})\in \mathbb{Z}_{>0}^2$ is fixed  with $m_w\leq m_{w+1}$. Then we can find non-negative
multiplicities $\{m_v\}_{v=w+2}^{s+1}$ such that $m_{v-1}-k_vm_v+m_{v+1}=0$ for any $s\geq v>w$.
 \item\label{mult_sequence:2}
 Symmetrically, assume that the pair $(m_{w-1},m_{w})\in \mathbb{Z}_{>0}^2$
is fixed  with $m_{w-1}\geq m_{w}$. Then we can find non-negative
multiplicities $\{m_v\}_{v=0}^{w-2}$ such that $m_{v-1}-k_vm_v+m_{v+1}=0$ for any $1\leq v<w$.
\end{enumerate}
\end{lemma}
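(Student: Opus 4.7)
The plan is to prove part (a) by a straightforward forward induction on the three-term linear recurrence, and to deduce part (b) by the symmetric argument. The only structural input needed is that the minimal resolution of a cyclic quotient singularity has $k_v\geq 2$ for every exceptional vertex; this is what makes the recursion ``expansive'' in the forward direction.

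First I would make the recurrence explicit. Given the pair $(m_w,m_{w+1})\in\mathbb{Z}_{>0}^2$ with $m_w\leq m_{w+1}$, define
\[
m_{v+1} := k_v m_v - m_{v-1} \qquad (v=w+1,\ldots,s).
\]
By construction this satisfies the required relation $m_{v-1}-k_vm_v+m_{v+1}=0$ on the whole range $s\geq v>w$, so the only content of the lemma is the non-negativity of the produced values $m_{w+2},\ldots,m_{s+1}$.

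The key claim is that the sequence is in fact non-decreasing, namely $m_{v-1}\leq m_v$ for every $w\leq v\leq s+1$. I would prove this by induction on $v$: the base step $m_w\leq m_{w+1}$ is the hypothesis. For the inductive step, assuming $m_{v-1}\leq m_v$ with $m_v\geq 1$, a direct computation gives
\[
m_{v+1}-m_v \;=\; (k_v-1)m_v - m_{v-1} \;\geq\; m_v - m_{v-1} \;\geq\; 0,
\]
where the first inequality uses $k_v\geq 2$. Thus $m_{v+1}\geq m_v\geq 1$, so in particular each $m_v$ is a positive (hence non-negative) integer. This completes part (a).

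Part (b) follows by applying the same argument to the reversed string. The recurrence is symmetric in the two neighbours, so starting from $(m_{w-1},m_w)$ with $m_{w-1}\geq m_w\geq 1$ and defining $m_{v-1}:=k_vm_v-m_{v+1}$ for $v=w-1,\ldots,1$ produces a non-increasing sequence of positive integers down to $m_1$, with $m_0\geq 0$ at the final step. I do not anticipate any real obstacle: the only subtlety is keeping track of the boundary indices $m_0$ and $m_{s+1}$, which are the unique places in the multiplicity system where the value $0$ is permitted, and the monotonicity argument accommodates this automatically.
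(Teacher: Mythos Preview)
Your proof is correct and is essentially identical to the paper's: both define the sequence forward by the recurrence, use $k_v\geq 2$ to show monotonicity $m_{v+1}\geq m_v$ via the same one-line inequality, and conclude non-negativity inductively, with part (b) handled by symmetry. The only difference is that you spell out the induction and the boundary indices more explicitly than the paper does.
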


\begin{proof}
To check \ref{mult_sequence:1} note that $m_{w+2}=k_{w+1}m_{w+1}-m_w\geq (k_{w+1}-1)m_{w+1}\geq m_{w+1}$,
the latter inequality follows since $k_v\geq 2$. Hence $m_{w+2}$ is non-negative and shows \ref{mult_sequence:1}
recursively. Part~\ref{mult_sequence:2} is similar.
\end{proof}

Then, the multiplicity system just found identifies a divisor supported on $\widetilde{S}\cup E\cup \widetilde{S}'$,
hence an invariant monomial. In particular, it is generated by the monomials $\{g_j\}_j$, cf. subsection \ref{ss:class}.
However, since  $m_u=1$, it should be one of the monomials $g_j$'s (up to a nonzero constant). Note that the number of
monomials realized in this way  is exactly $k_u-\val_u+1$. Moreover,  these  monomial functions  are linearly independent
modulo $\m_{(X,0)}^2$ (since all the $g_j$ monomials  are).
Hence, monomial functions constructed in this way (i.e., the $g_j$'s with $m^j_u=1$) generate freely
$H^0(\tX,  \cO_{\tX}(-E))/H^0(\tX, \cO_{\tX}(-E-E_u))$.

Now, if we run the above construction for any $1\leq u\leq s$, and we wish to reconstruct all the possible
multiplicity systems with at least one term $m_u=1$, then, in fact, we recover exactly the multiplicity system
of all monomial functions $\{g_j\}_j$.

However, if we consider all the possible choices of vertices $u$, then we get some coincidences:
the choice $(m_{u-1},m_{u+1})=(k_u-1,1)$ for $m_u=1$ generates the same system as $(m_{u},m_{u+2})=(1,k_{u+1}-1)$
for $m_{u+1}=1$. But these type of coincidences generate all the possible coincidences.

Thus, in this way we can generate all the monomials $g_j$ and even we can put them in order, ordered increasingly
by $m_0$. (This order coincides with the order of the lattice points on the boundary path--segment considered above.)

\begin{example}\label{ex:22322}
Consider the graph of the cyclic singularity $(15,11)$ from Example~\ref{ex:1511:1}.
Note that $\mathbf k=[2,2,3,2,2]$ is the sequence of (opposites of) self-intersecions of the exceptional divisors
of the dual resolution graph. We have the following table of possible multiplicity sequences.
In the horizontal rows instead of the complete graph we put only the two arrows, and the multiplicities on the place
of the vertices of the graph. On the left column we indicate the invariant monomial as well.
With boldface we indicate the zigzag of multiplicity 1 entries.
The first two columns indicate the lattice points $(1,0), \ (1,1), \ (3,4),\ (11,15)$
which generate the semigroup~$\sigma^\vee\cap \ZZ^2$.

\begin{picture}(300,80)(40,-40)
\put(72,25){\makebox(0,0)[r]{$z_1^{15}$}}
\put(112,25){\makebox(0,0){$0$}}
\put(150,25){\makebox(0,0){${\bf 1}$}}
\put(190,25){\makebox(0,0){$2$}}
\put(230,25){\makebox(0,0){$3$}}
\put(270,25){\makebox(0,0){$7$}}
\put(310,25){\makebox(0,0){$11$}}
\put(140,25){\vector(-1,0){20}} \put(320,25){\vector(1,0){20}}
\put(348,25){\makebox(0,0){$15$}}

\put(72,10){\makebox(0,0)[r]{$z_1^{4}z_2$}}
\put(112,10){\makebox(0,0){$1$}}
\put(150,10){\makebox(0,0){${\bf 1}$}}
\put(190,10){\makebox(0,0){${\bf 1}$}}
\put(230,10){\makebox(0,0){${\bf 1}$}}
\put(270,10){\makebox(0,0){$2$}}
\put(310,10){\makebox(0,0){$3$}}
\put(140,10){\vector(-1,0){20}} \put(320,10){\vector(1,0){20}}
\put(348,10){\makebox(0,0){$4$}}

\put(72,-5){\makebox(0,0)[r]{$z_1z_2^4$}}
\put(112,-5){\makebox(0,0){$4$}}
\put(150,-5){\makebox(0,0){$3$}}
\put(190,-5){\makebox(0,0){$2$}}
\put(230,-5){\makebox(0,0){${\bf 1}$}}
\put(270,-5){\makebox(0,0){${\bf 1}$}}
\put(310,-5){\makebox(0,0){${\bf 1}$}}
\put(140,-5){\vector(-1,0){20}} \put(320,-5){\vector(1,0){20}}
\put(348,-5){\makebox(0,0){$1$}}

\put(72,-20){\makebox(0,0)[r]{$z_2^{15}$}}
\put(112,-20){\makebox(0,0){$15$}}
\put(150,-20){\makebox(0,0){$11$}}
\put(190,-20){\makebox(0,0){$7$}}
\put(230,-20){\makebox(0,0){$3$}}
\put(270,-20){\makebox(0,0){$2$}}
\put(310,-20){\makebox(0,0){${\bf 1}$}}
\put(140,-20){\vector(-1,0){20}} \put(320,-20){\vector(1,0){20}}
\put(348,-20){\makebox(0,0){$0$}}

\end{picture}
\end{example}

\begin{remark}\label{remarkab}  (a)
The zigzag of multiplicity 1 entries has the following shape: the first column has $k_1$ places, the second one
$k_2-1$ places, however the last place of the first column is in the same row as the first place of the second one.
And similar rule applies for the other columns as well. In particular, all together there are
$\{k_1+(k_2-1)+\ldots + (k_{s-1}-1)+k_s\}-(s-1)=1+\{  (k_1-1)+(k_2-2)+\ldots + (k_{s-1}-2)+(k_s-1)\}$
rows, whose number is $t+2=r+1=-Z_{\min}^2+1$ (i.e. the number of $g_j$'s).

(b) Let us consider the dual cone $\sigma $ of $\sigma ^\vee$,
 $\sigma=\RR_{\geq 0}\langle (n,-q), (0,1)\rangle$, which determines (by a similar way) the dual graph as well.
 Then,
from this diagram one can read the self--intersections of the dual graphs, or the entries
$[n/(n-q)]=[d_1,\ldots, d_t]$. Indeed for any $j=1, \ldots, t$, one has the dual identities
$m^{j-1}_v-d_jm^j_v+m^{j+1}_v=0$ (for any $v$). In fact for $j=1,\ldots , t$ the integer
 $d_j-1$ is exactly  the number of
1's of the zigzag sitting in the row $j$. In the above Example \ref{ex:22322}
the dual sequence is $15/4=[4,4]=[d_1,d_2]$.
In this way the zigzag of 1's can be identified with Riemenschneider's point diagram (providing the above duality)
\cite{R74}.
\end{remark}

\subsection{An analytic plumbing realization of $\tX$ and the functions $g_j$ on it}\label{ss:Pl} \

\subsubsection{}
We would like to emphasize an important point regarding the results of the previous
subsection (and the coming one).
In the previous section we constructed a very precise base for any
$\cF(-E_u)/\cF(-2E_u)$ and also for $\m_{(X,o)}/\m_{(X,o)}^2$. These monomial functions have the following crucial property: we will be able to cover the resolution space $\tX$ with local coordinate systems such that around any point of $\tX$ we will be able to represent in one of the local coordinates {\it simultaneously}  all the monomial functions. (That is, in our application, regarding a base of $\m_{(X,o)}/\m_{(X,o)}^2$,
we will need definitely more than just the multiplicity systems of the base elements. We will need that all the strict transforms are supported on the very same $\widetilde{S}\cup\widetilde{S}'$, and we will need
simultaneous equations of all of them in the very same local coordinate system around any point.)
In the previous subsection we identified the monomial functions as a base of  $\m_{(X,o)}/\m_{(X,o)}^2$ (with some additional properties regarding their multiplicities). In this subsection we will construct certain  local coordinates in which we can write simultaneously all the
equations of the monomial functions.
\subsection{}
In the sequel  we construct the resolution space $\tX$ in such a way that we will have canonical  local charts in which we can represent
  all the coordinate functions $g_j$ simultaneously.

Recall from the previous parts  that $s$ is the number of vertices in the
dual resolution graph of $(X,o)$.
Consider
$s+1$ copies  $U_0,\ldots, U_s$ of $\CC^2$  with affine coordinates $(\alpha_v,\beta_v)$, $0\leq v\leq s$.
For each $1\leq v\leq s$ we glue $U_{v-1}\setminus \{\alpha_{v-1}=0\}$ with
$U_v\setminus \{\beta_v=0\}$ by
$\beta_v=\alpha_{v-1}^{-1}$ and $\alpha _v=\alpha_{v-1}^{k_v}\beta_{v-1}$.
This way, the curve $E_v$ is covered by the union $U_{v-1}\cup U_v$ and it is given by
$\{\beta_{v-1}=0\}$  in $U_{v-1}$  and $\{\alpha_v=0\}$ in $U_v$. It has self--intersection number $-k_v$.
 Hence $\cup_{v=1}^sE_v$ has dual resolution graph $\Gamma$ of the
cyclic quotient singularity we started with. Since the cyclic quotient singularity is taut
(the analytic type is determined by the topology), we can assume that the resolution $\tX$ of
the cyclic quotient singularity $(X,o)$ we started with in the previous sections is exactly the space
$\cup_{v\geq 0}U_v$, and $(X,o)$ is the singularity obtained from $\cup_{v\geq 0}U_v$ by contracting $\cup_{v=1}^sE_v$.

In the sequel we denote this (analytic plumbed)
space by $\tX$ and the resolution/contraction  by $\pi$.

\begin{picture}(300,60)(-10,0)
\put(140,35){\makebox(0,0)[r]{$E_3$}}\put(260,35){\makebox(0,0)[r]{$E_s$}}
\put(264,43){\makebox(0,0){$\beta_s$}}\put(285,43){\makebox(0,0){$\alpha_s$}}
\put(280,50){\vector(-1,-1){10}}\put(270,50){\vector(1,-1){10}}
\put(50,50){\line(1,-1){40}} \put(80,10){\line(1,1){40}}\put(110,50){\line(1,-1){40}}
\put(180,30){\makebox(0,0){$\ldots$}}\put(210,50){\line(1,-1){40}} \put(240,10){\line(1,1){40}}
\put(60,50){\vector(-1,-1){10}}\put(50,50){\vector(1,-1){10}}
\put(45,35){\makebox(0,0){$\beta_0$}}\put(67,43){\makebox(0,0){$\alpha_0$}}
\put(90,10){\vector(-1,1){10}}\put(80,10){\vector(1,1){10}}
\put(73,17){\makebox(0,0){$\beta_1$}}\put(89,25){\makebox(0,0){$\alpha_1$}}
\put(120,50){\vector(-1,-1){10}}
\put(105,43){\makebox(0,0){$\beta_2$}}
\end{picture}

The functions $\pi^*(g_j)$ can be reconstructed as follows. If $\{m_v^j\}_{v=0}^{s+1}$ is the
multiplicity system of $g_j$ then $\pi^*(g_j)$ in the above charts is given by
$$\pi^*(g_j)=\alpha_0^{m^j_0}\beta_0^{m^j_1}=\alpha_1^{m^j_1}\beta_1^{m^j_2} = \cdots = \alpha_v^{m^j_v}\beta_v^{m^j_{v+1}}=\cdots =
\alpha_s^{m^j_s}\beta_s^{m^j_{s+1}}.$$
Eg., the pullback of  $y=g_0$ is
$$\pi^*(g_0)=\alpha_0^0\beta_0=\alpha_1\beta_1^{k_1}=\cdots
.$$
These functions $\Psi:=(g_0,\ldots, g_{t+1}):\tX\to\CC^{t+2}$ embed $(X,o)$ into $\CC^{t+2}$, the image of
$\Psi$ is $(X,o)$.  The semigroup relations between the points $\{(p_1^j,p_2^j)\}_j$ induce the monomial equations of $(X,o)$.
The semigroup relations correspond  to the dual multiplicity relations involving the entries $(d_1, \dots, d_t)$, and the
equations of $(X,o)$  (where now $\{g_j\}_{j=0}^{t+1} $ represent the coordinates of
$\CC^{t+2}$)   are  (cf. \cite{BeRi,StevQ}):
$$g_{j-1}g_{k+1}=g_j^{d_j-1}\cdot (\prod _{j<l<k}g_l^{d_l-2}) \cdot g_k^{ d_k-1} \ \ \ \
(1\leq j\leq k\leq t).$$

\subsection{The embedded curves}\label{ss:embcurve}
Now we arrived at our main point, which motivates all the above discussions.
 Let us consider a disc $\widetilde{D}$ in $\tX$ transversal to $E_v$. It can be given in chart $U_v$ by $\beta_v=c_v$ constant and with
 local coordinate $t_v=\alpha_v$. Then  $\Psi(\widetilde{D})\subset \CC^{t+2}$ is given by the parametrization
 $$t_v\mapsto (g_0(t_v,c_v), \ldots, g_{t+1}(t_v, c_v))=
 (t_v^{m^0_v}c_v^{m^0_{v+1}}, \dots , t_v^{m^{t+1}_v}c_v^{m^{t+1}_{v+1}}).$$
 This necessarily has a linear entry in $t_v$,
 the linear terms are realized by those $g_j$'s, for which  $m^j_v=1$. The constant  parameter  $c_v$ correspond to the position of
 the disc $\widetilde{D}$, it parametrizes the intersection point $\widetilde{D}\cap E_v$. If we forget the non--linear terms, we get the tangent vector
 of $\Psi(\widetilde{D})$ in $\CC^{t+2}$. If $m^j_v=1$ for indices $j_v'\leq j\leq j_v''$ (the position of the 1's in the column $v$),
 then the tangent vector of the parametrization of $\Psi(\widetilde{D})$ is
 $$(0, \ldots, 0,   c_v^{m^{j_v'}_{v+1}}, \ldots ,  c_v^{m^{j_v''}_{v+1}}, 0, \dots, 0). $$
 Note that
 \begin{equation}\label{eq:diff}
 \mbox{ the entries $\{m^j_{v+1}\}_{j_v'\leq j\leq j_v''}$ are all different} \end{equation}
since two neighbors $(m^j_v,m^j_{v+1})$ in a  rows determine the whole multiplicity system of that row (and
  the rows are different).

\subsubsection{}
In the above construction we embedded the disc $\widetilde{D}$ by $\beta_v=c_v\in\mathbb{C}^*$ and
$\alpha_v=t_v$, where $t_v$ is the local parameter. Let us replace this parametrization by  a  general one, but still transversal to
$E_v=\{\alpha_v=0\}$. This has the from
$\beta _v=c_v+c_v't_v+c_v''t_v^2+\cdots$, $\alpha_v=b'_vt_v+b_v''t_v^2+\cdots $ ($b_v'\not=0$).
We let the reader to verify that the tangent line of the newly defined $D_{\gen}=\pi(\widetilde{D}_{\gen})$ is  the same as the tangent line of the
previous $D$. Hence, indeed, our parametrizations covers the case of general transversal cuts completely, even if apparently we use a special
type of parametrization of the cuts.

\subsubsection{}
 Let us place  two transversal discs, corresponding to the positions $c_{v,1}$ and $c_{v,2}$ on $E_v$.
 Then the tangent vectors of the two parametrizations are
 \begin{equation*}\begin{split}
 &(0, \ldots, 0, c_{v,1}^{m^{j_v'}_{v+1}}, \ldots , c_{v,1}^{m^{j_v''}_{v+1}}, 0,\dots, 0)\\
 &(0, \ldots, 0,c_{v,2}^{m^{j_v'}_{v+1}}, \ldots ,c_{v,2}^{m^{j_v''}_{v+1}}, 0, \dots, 0).
 \end{split}\end{equation*}
 Property (\ref{eq:diff}) guarantees that they are linearly independent whenever we have at least two non--trivial
 linear entry in each vector.
Recall that the number of non--trivial linear entries is the number of 1's in the column $v$, hence
  $k_v-\val_v+1$. Therefore, the tangent vectors of
   any   $k_v-\val_v+1$ different disc (transversal to $E_v$),  mapped by $\Psi$,  will
  be linearly independent in $\CC^{t+2}$. (The determinant of the system is a nonzero
  generalized Vandermonde determinant.)
This is true for any $v$.

On the other hand, if we wish to put discs simultaneously we need an additional care.
The  $k_v-\val_v+1$ tangents lines of $v$ and the  $k_{v+1}-\val_{v+1}+1$ tangent lines of $v+1$ generate a subspace
of dimension $(k_v-\val_v+1)+( k_{v+1}-\val_{v+1}+1)-1$. Note that
the $-1$ is responsible for  the common row  $j$  corresponding to $m^j_v=m^j_{v+1}=1$.

More generally,  if we place  $r_v$ discs  on each $E_v$, then we have to understand the rank of the following matrix.
Consider an $(s,t)$--matrix (corresponding to the $\{m^j_v\}_{1\leq v\leq s, 1\leq j\leq t}$ matrix).
Replace  each column  $v$ by $r_v$ columns, indexed by $i$, (hence replace the $v$ column by a
$(r_v,t)$--matrix $M_v$) whose entries corresponding to $(i,j)$, $j_v'\leq j\leq j_v''$,
 are  $ c_{v,i}^{m^{j}_{v+1}}$, where all $\{c_{v,i}\}_i$ are different. The other entries of $M_v$ are zero.
 This is the matrix of the tangent line of the curves $C$, the union of the images of the discs by $\Psi$.

 Eg., in the case of $[2,2,3,2,2]$ if we  start with $r_1=2 $ and $r_2=1$, then the left upper corner of the matrix is
 $$\begin{pmatrix}  c_{1,1}^2 & c_{1,2}^2 & 0 &  \ \\
 c_{1,1} & c_{1,2} & c_{2,1} & \ \\
 0&0&0& \ \\
 \ & \ & \ & \ \end{pmatrix}
$$
Hence the tangent directions generate a subspace of dimension two. Note that any combination $(r_1,r_2)=(2,0)$ or $(1,1) $
generate a subspace of dimension 2. More generally, if we wish linear independence of the tangent vectors, then the entries
$(r_1,r_2)$ must satisfy $r_v\leq k_v-\val_v+1$ but also $r_1+r_2\leq  (k_1-\val_1+1)+(k_2-\val_2 +1)-1$.
This discussion shows the following result.

\begin{theorem}
Assume that on each $E_v$ we put $r_v$ different transversal discs. Then the tangent vectors of their
images by $\Psi$ are linearly independent whenever we have the following
inequalities: for any $1\leq v_1\leq v_2\leq s$
\begin{equation}\label{eq:ri}
\sum _{v_1\leq v\leq v_2}r_v\leq \sum  _{v_1\leq v\leq v_2}( k_v-{\rm val}_v+1) -(v_2-v_1)=1+
\sum  _{v_1\leq v\leq v_2}( k_v-{\rm val}_v).\end{equation}
Furthermore, these curves in $\CC^{t+2}$ form a $(\sum_vr_v)$--tuple.
\end{theorem}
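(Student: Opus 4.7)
The plan is to split the proof into two stages: first, to show that under the stated inequalities the tangent vectors $T_{v,i}\in\CC^{t+2}$ of the curves $\Psi(\widetilde{D}_{v,i})$ (computed in Subsection~\ref{ss:embcurve}) are linearly independent; second, to deduce the ordinary $R$--tuple property with $R=\sum_v r_v$. Once each $C_{v,i}$ is known to be smooth and to have a tangent direction independent from the others, the inductive Hironaka argument from~\eqref{eq:deltaA1} gives the optimal bound $\delta=R-1$, which by Subsection~\ref{ex:delta22} characterizes ordinary tuples.

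For Stage 1, I would form the tangent matrix $M$ whose column labelled by $(v,i)$ has entry $c_{v,i}^{m^j_{v+1}}$ at row $j$ when $j\in[j_v',j_v'']$ (the range of $1$'s in the $v$--th column of the multiplicity zigzag) and $0$ otherwise. Suppose a nontrivial relation $\sum\lambda_{v,i}T_{v,i}=0$, and let $v_1$ and $v_2$ be the smallest and largest block indices occurring with a nonzero coefficient. The relation is supported on the rows $[j_{v_1}',j_{v_2}'']$; by the zigzag geometry, where each pair of adjacent blocks shares exactly one boundary row, the number of these rows equals
\[
N_{v_1,v_2}=\sum_{v_1\le v\le v_2}(k_v-\val_v+1)-(v_2-v_1)=1+\sum_{v_1\le v\le v_2}(k_v-\val_v).
\]
The hypothesis applied to $[v_1,v_2]$ gives $\sum_{v\in[v_1,v_2]}r_v\le N_{v_1,v_2}$, so the relevant submatrix $M'$ has at most as many columns as rows.

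The crux is to show that $M'$ has full column rank. Within each block $v$, by property~\eqref{eq:diff} the exponents $\{m^j_{v+1}\}_{j_v'\le j\le j_v''}$ are pairwise distinct, and the $c_{v,i}$ are distinct by assumption, so the per--block submatrix is a generalized Vandermonde. Adjacent blocks overlap only at their single shared zigzag row. I would exhibit a nonzero $R'\times R'$ minor of $M'$ (with $R'=\sum_{v\in[v_1,v_2]}r_v$) by distributing $r_v$ rows to each block $v$: its exclusive rows $[j_v'+1,j_v''-1]$ are used first, while the shared boundary rows are allocated to whichever adjacent block saturates its single--block bound $r_v=k_v-\val_v+1$. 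The determinant then factors as a product of per--block generalized Vandermonde determinants, each nonvanishing because the parameters and exponents are distinct. \emph{The main obstacle} is the borderline case in which several blocks simultaneously saturate their single--block bound; the interval hypothesis forces the interior blocks to carry compensating slack, and a careful combinatorial bookkeeping (or, equivalently, an explicit block--Laplace expansion along the exclusive rows of each block, followed by the shared rows) is needed to certify that a consistent row assignment exists and produces a nonzero determinant.

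For Stage 2, linear independence of the $T_{v,i}$ shows that each $C_{v,i}:=\Psi(\widetilde{D}_{v,i})$ is smooth. I would then induct on the number $k$ of components. If $C_1\cup\cdots\cup C_{k-1}$ is already an ordinary $(k-1)$--tuple, a local analytic change of coordinates in $(\CC^{t+2},o)$ realizes it as the union of the first $k-1$ coordinate axes. Since the tangent direction of $C_k$ is linearly independent of $e_1,\ldots,e_{k-1}$, parametrizing $C_k$ and reducing its implicit equations modulo the ideal of the union of axes (analogously to the multiplicity computations of Subsection~\ref{ss:first}) yields $(C_1\cup\cdots\cup C_{k-1},\,C_k)_{\Hir}=1$. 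Hironaka's formula~\eqref{eq:deltaA1} then gives $\delta(C_1\cup\cdots\cup C_k)=k-1$, closing the induction and identifying the total union as an ordinary $R$--tuple.
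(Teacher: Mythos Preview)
Your proposal is correct and follows essentially the same route as the paper's own argument. The paper builds the tangent matrix exactly as you do, observes the staircase zigzag structure with single--row overlaps between consecutive blocks, invokes the generalized Vandermonde nonvanishing within each block, and then states that ``this discussion shows the following result''; the $(\sum_v r_v)$--tuple claim is dispatched in one line via induction and Hironaka's formula~\eqref{eq:deltaA1}, just as in your Stage~2.

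Where you go further than the paper is in actually justifying the rank claim for several blocks simultaneously. Your row--distribution scheme can be made to work cleanly by peeling off the rightmost block $v_2$ inductively: if $r_{v_2}\le k_{v_2}-\val_{v_2}$, choose $S_{v_2}\subset(j_{v_2}',j_{v_2}'']$ and the selected minor becomes block lower triangular with the block--$v_2$ generalized Vandermonde in the corner; if $r_{v_2}=k_{v_2}-\val_{v_2}+1$, the interval inequality for $[v_1,v_2]$ combined with $r_{v_2}$ forces $\sum_{v_1}^{v_2-1}r_v\le N_{v_1,v_2-1}-1$, so one may forbid the row $j_{v_2-1}''=j_{v_2}'$ to the remaining blocks and again obtain block triangularity. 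In either case the determinant factors as a product of per--block generalized Vandermondes and is nonzero. This fills precisely the ``main obstacle'' you flagged, and it is the only place where the paper leaves the reader to complete the linear algebra.
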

\noindent
The last statement follows by induction and using Hironaka's formula (\ref{eq:deltaA1}).

\begin{example} One can  verify that the curve
 $\pi(\widetilde{C}_{Z_{\min}}\cup \widetilde{D}^+) $ satisfies (\ref{eq:ri}).
 Indeed, $\widetilde{C}_{Z_{\min}}$ corresponds to $\sum_vr_v E^*_v$ with
 $r_v=k_v-\val_v $ for all $v$. The additional $+1$ on the right hand side of (\ref{eq:ri}) takes care of the additional $\widetilde{D}^+$ as well.
\end{example}

\begin{example} For any $0<a<n$ write $s_{a[E^*_s]}$ as $\sum_v a_v E^*_v$ as in subsection \ref{eq:sh}.
Then the entries $\{a_v\}_v$ satisfy  (\ref{eq:ri}) with $r_v=a_v$. This can be verified by induction
using the steps of the algorithm from \ref{eq:sh}.
First one verifies the starting case
 $(k_1-1, k_2-2, \ldots, k_s-2)$, then one follows the inductive step of the algorithm.
\end{example}

\begin{remark}\label{rem:utolso}
In the above discussions we used the minimal resolution. As we already mentioned, if we use any other resolution, along the  newly created
irreducible exceptional curves the multiplicities of $Z_{\min}$ will be $\geq 2$, hence these components cannot support strict transforms
of smooth curves. So, our treatment  in the minimal resolution in fact provides the complete  discussion.
\end{remark}

\bibliographystyle{amsplain}

\providecommand{\bysame}{\leavevmode\hbox to3em{\hrulefill}\thinspace}
\providecommand{\MR}{\relax\ifhmode\unskip\space\fi MR }
\providecommand{\MRhref}[2]{%
  \href{http://www.ams.org/mathscinet-getitem?mr=#1}{#2}
}
\providecommand{\href}[2]{#2}

\end{document}